\newtheorem{theorem}{Theorem}
\newtheorem{lemma}[theorem]{Lemma}
 \newcommand{\R}{\mathbb{R}}
 \newcommand{\Hyp}{\mathbb{H}}
 \newcommand{\Hhh}{\mathscr{H}}
 \newcommand{\dist}{\mathrm{dist}}
 \newcommand{\atanh}{\mathrm{atanh}}
  \newcommand{\woz}{\backslash\{0\}}
  \newcommand{\wo}{\backslash}
 \newcommand{\lm}{\varnothing}
  \newcommand{\eucl}{\text{\scalebox{0.8}{\,$\mathbb{E}$}}}  
  \newcommand{\poin}{\text{\scalebox{0.8}{\,$\mathcal{P}$}}}  
   \newcommand{\klein}{\text{\scalebox{0.8}{\,$\mathcal{K}$}}}
\begin{document}

\title{Projection theorems in hyperbolic space}
\author{Zolt\'an M. Balogh and  Annina Iseli}

\address {Mathematisches Institut,
Universit\"at Bern,
Sidlerstrasse 5,
CH-3012 Bern,
Switzerland}

\email{zoltan.balogh@math.unibe.ch}
\email{annina.iseli@math.unibe.ch}

\keywords{Hausdorff dimension, projections, hyperbolic space\\
{\it 2010 Mathematics Subject Classification: 28A78 (primary) and 53C22 (secondary)} }

\thanks{ This research was supported by the Swiss National Science Foundation Grant Nr.\ 00020 165507.}

\begin{abstract}
We establish Marstrand-type projection theorems for orthogonal projections along geodesics onto $m$-dimensional subspaces of hyperbolic $n$-space by a geometric argument. Moreover, we obtain a Besicovitch-Federer type characterization of purely unrectifiable sets in terms of these hyperbolic orthogonal projections. \end{abstract}

\maketitle

\section{Introduction}

Marstrand's theorem~\cite{Marstrand1954} states that given a Borel set $A\subset \R^2$, for almost every line $L$ the orthogonal projection of $A$ onto $L$ is a set of Hausdorff dimension equal to the minimum of $1$ and the Hausdorff dimension of $A$. This result has marked the start of a large series of results in the same spirit. In particular, Marstrand's theorem has been sharpened and generalized to higher dimensions by Kaufman~\cite{Kaufman1968}, Falconer~\cite{Falconer1982}, and Mattila~\cite{Mattila1975}. Also, similar problems have been studied in various other settings such as Heisenberg groups \cite{BDCFMT2013} \cite{BFMT2012} \cite{Hovila2014} and normed spaces~\cite{BaloghIseli2018}~\cite{Iseli_Arx2018}, as well as for radial projections in \cite{Orponen_Arx2017}, different notions of measure and dimension \cite{FalcHow1996} \cite{FalcHow1997} \cite{FrO2017}, and restricted families of projections~\cite{FO2014}~\cite{Chen_Arx2017}~\cite{OrpVen_Arx2017} (and references therein).
In this paper, we prove Marstrand-type projection theorems as well as a Besicovitch-Federer-type projection theorem (i.e. a characterization of purely unrectifiable sets in terms of projections) for orthogonal projection along geodesics in hyperbolic $n$-space. In particular, we generalize previous results of the authors~\cite{BaloghIseli2016} to higher dimensions. An extended introduction to the topic is provided in this previous work. For a more exhaustive background on projections theorems in various setting we recommend the recent survey article~\cite{Mattila_Arx2017} and the references therein.\smallskip

By $\Hyp^n$ denote the hyperbolic $n$-space and by $d$ the hyperbolic metric on $\Hyp^n$. Fix a base point $p\in \Hyp^n$ and identify the tangent plane $T_p\Hyp^n$ with $\R^n$. Now, consider the exponential mapping $\exp_p:\R^n\to \Hyp^n$ at $p$. 
 Note that for every $m$-plane $V$ (i.e. $m$-dimensional linear subspace of $\R^n$) the image $\exp_p(V)$ is a 
 geodesically convex $m$-dimensional submanifold of $\Hyp^n$ that is isometric to~$\Hyp^m$.
Since $\Hyp^n$ is a simply connected Riemannian manifold of constant sectional curvature equal to $-1$, for all $x\in \Hyp^n$, there exists a unique point $q\in \exp_p(V)$ such that \[\dist(x,\exp_p(V))=d(x,q).\] Define the projection of $\Hyp^n$ onto the hyperbolic $m$-plane $\exp_p(V)$ by
\[P_V:\Hyp^n\to \exp_p(V), \ P_V(x)=q.\]
As standard arguments show (see Proposition 2.4 in \cite{BH1999}), for all $x\in \Hyp^n$ and all $m$-planes $V$ the geodesic segment $[x,P_V(x)]$ intersects $\exp_p(V)$ orthogonally in the point $P_V(x)$. Therefore, we will refer to the collection of mappings $P_V:\Hyp^n\to \exp_p(V)$, for $m$-planes $V$, as the family of orthogonal projections (along geodesics) onto $m$-planes in $\Hyp^n$. \smallskip

It is known that the projections $P_V: \Hyp^n\to \exp_p(V)$ are $1$-Lipschitz (i.e. distance non-increasing) with respect to the hyperbolic metric $d$, and hence {$\dim P_V(A)\leq \dim A$}, for all sets $A\subseteq \Hyp^n$ and all {$m$-planes}~$V$. Moreover, the facts that $P_VA\subset V$ and {$\dim V=m$} imply that $\dim P_VA\leq m$ for all {$m$-planes}~$V$. This yields the same upper bound $\dim P_V A\leq \min\{m, \dim A\}$ as in the Euclidean setting. It is therefore a natural question whether the generic lower bounds for $\dim P_V A$ is the same as well, i.e. whether Marstrand-type projection theorems generalize to the hyperbolic setting.\smallskip

We call the family of all $m$-planes $V$ in $\R^n$ the Grassmannian of $m$-planes (in $\R^n$) and denote it by $G(n,m)$. The Grassmannian $G(n,m)$ carries a natural measure $\sigma_{n,m}$ that is induced by the Haar measure on $O(n)$ via the group action of $O(n)$ on $G(n,m)$; see \cite{Mattila1995}, Chapter~3. 
Moreover, the Grassmannian can be smoothly parametrized by local charts in $\R^K$, where $K=(n-m)m$; see~\cite{AnninaPhD}, Section 2.3. This yields a notion of zero sets for the $s$-dimensional Hausdorff measure $\Hhh^s$, $s>0$, and of Hausdorff dimension $\dim$ of subset of $G(n,m)$.\smallskip

The following Marstrand-type theorem is a main result of this paper. It can be considered an analog of results in Euclidean space due to Marstrand~\cite{Marstrand1954}, Kaufman \cite{Kaufman1968}, Falconer~\cite{Falconer1982}, Mattila~\cite{Mattila1975} and Peres-Schlag~\cite{PS2000}.
\begin{theorem}\label{thm_Marstrand}
For the family of orthogonal projections $P_V:\Hyp^n\to \exp_p(V)$, $V\in G(n,m)$, onto $m$-planes in $\Hyp^n$
and for all Borel sets $A\subseteq \Hyp^n$, the following hold.
\begin{enumerate}[label={(\arabic*)}, topsep=3pt, itemsep=3pt, leftmargin=30pt]
\item If $\dim A \leq m$, then 
\begin{enumerate}[topsep=2pt, itemsep=2pt]
\item  $\dim (P_V A)= \dim A$ for $\sigma_{n,m}$-a.e.\ $V\in G(n,m) $,
\item For $0<\alpha\leq\dim A$,\\
$\dim(\{V\in G(n,m) :  \dim(P_V A)<\alpha\})\leq (n-m-1)m+\alpha.$
\end{enumerate}
\item If $\dim A > m$, then 
\begin{enumerate}[topsep=2pt, itemsep=2pt]
\item  $\Hhh^m (P_V A)>0$ for $\sigma_{n,m}$-a.e.\ $V\in G(n,m) $,
\item $\dim(\{V\in G(n,m) : \mathscr{H}^m(P_V A)=0\})\leq (n-m)m+m-\dim A$.
\end{enumerate}
\item If $\dim A>2m$, then
\begin{enumerate}[topsep=2pt, itemsep=2pt]
\item  $P_V A$ has non-empty interior in $V$ for $\sigma_{n,m}$-a.e.\ $V\in G(n,m)$,
\item  $\dim(\{V\in G(n,m):  (P_VA)^\circ \neq \lm \})\leq (n-m)m+2m -\dim A.$
\end{enumerate}
\end{enumerate} 
\end{theorem}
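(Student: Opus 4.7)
My plan is to reduce the theorem to its Euclidean counterpart via the Klein ball model of $\Hyp^n$, in which the hyperbolic orthogonal projections onto $m$-planes through $p$ turn out to coincide with Euclidean orthogonal projections.

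Realize $\Hyp^n$ as the open Euclidean unit ball $B^n\subset\R^n$ with $p$ placed at the origin. The first step is to show that $P_V=\pi_V|_{B^n}$ for every $V\in G(n,m)$, where $\pi_V$ is the Euclidean orthogonal projection onto $V$. To prove this identification I would use the following symmetry argument: $\exp_p V=V\cap B^n$ is a flat disk, and the Euclidean reflection $\sigma_V$ across $V$ lies in $O(n)$; since $O(n)$ preserves chords and cross-ratios on $B^n$, $\sigma_V$ is a hyperbolic isometry fixing $\exp_p V$ pointwise. For any $x\in B^n$, the Euclidean segment $[x,\sigma_V(x)]$ is a chord, hence a hyperbolic geodesic, and it meets $V$ at the Euclidean midpoint $\pi_V x$; both Euclidean and hyperbolic perpendicularity at $\pi_V x$ follow from the $\sigma_V$-symmetry. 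This forces $P_V x=\pi_V x$.

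With this identification, I would reduce the six conclusions to their Euclidean versions as follows. On any closed subset $K\subset B^n$ at positive Euclidean distance from $\partial B^n$, the hyperbolic and Euclidean metrics are bi-Lipschitz equivalent, so Hausdorff dimension, $\Hhh^m$-nullity, and the property of having nonempty interior in an $m$-plane all coincide between them. Writing $A=\bigcup_k A_k$ with $A_k=A\cap\{x\in B^n:|x|\leq 1-1/k\}$, I would apply the Euclidean Marstrand-Kaufman-Falconer-Mattila-Peres-Schlag theorems (\cite{Marstrand1954}, \cite{Kaufman1968}, \cite{Falconer1982}, \cite{Mattila1975}, \cite{PS2000}) to each $\pi_V A_k=P_V A_k$ and take unions in $k$ to recover all six conclusions. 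The exceptional-set bounds match exactly after noting $(n-m)m-(m-\alpha)=(n-m-1)m+\alpha$.

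The crux of the argument is the identification $P_V=\pi_V$, which is quite strong: despite the hyperbolic and Euclidean metrics being very different on $B^n$, their orthogonal projections onto flat subspaces through the origin coincide exactly. This hinges on $V$ containing $p$, i.e., on the fact that $\{\exp_p V:V\in G(n,m)\}$ consists of totally geodesic $m$-planes all passing through the basepoint. Were one to project onto off-centered hyperbolic $m$-planes, the $\sigma_V$-symmetry would fail and a genuine transversality analysis in the spirit of Peres-Schlag would be needed; here the geometric shortcut makes this unnecessary.
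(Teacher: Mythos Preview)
Your proposal is correct and follows essentially the same strategy as the paper: reduce to the Euclidean projection theorems by passing to the Klein model, where the hyperbolic closest-point projection onto an $m$-plane through the origin coincides with the Euclidean orthogonal projection, and then use that the hyperbolic and Euclidean metrics are locally bi-Lipschitz on $B^n$. The only cosmetic difference is that the paper starts in the Poincar\'e model and builds an explicit diffeomorphism $\Psi$ to the Klein model to obtain $P_V^{\poin}=\Psi^{-1}\circ P_V^{\eucl}\circ\Psi$, whereas you establish $P_V=\pi_V$ directly in the Klein model via the reflection $\sigma_V\in O(n)$; the reduction to the classical Euclidean results is then identical.
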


We will prove Theorem~\ref{thm_Marstrand} by a comparison argument. Namely, we will define a self-map of the unit ball that by conjugation transforms hyperbolic orthogonal projections (displayed in the Poincar\'e model) into Euclidean orthogonal projections; see Section~\ref{sec_proofs}. 
The same arguments will allow us to establish a Besicovitch-Federer-type \cite{Besic1939}\cite{Federer1947} characterization of purely $m$-unrectifiable subsets of $\Hyp^n$. Recall that a subset $A$ of a metric space $X$ is called $m$-rectifiable if there exist at most countably many Lipschitz mappings $f_i:\R^m\to X$ such that 
\[\Hhh^m\Big(A\, \wo \bigcup f_i(\R^m)\Big)=0.\]
On the other hand, a set $F\subseteq X$ is called purely $m$-unrectifiable, if $\Hhh^m(F\cap A)=0$ for every $m$-rectifiable set $A\subseteq X$.
\begin{theorem}\label{thm_BesFed}
A set $A\subseteq \Hyp^n$ with $\Hhh^m(A)<\infty$ is purely $m$-unrectifiable if and only if for $\sigma_{n,m}$-a.e.\ $V\in G(n,m) $, we have $\Hhh^m(P_V(A))=0.$
\end{theorem}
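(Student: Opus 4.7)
The plan is to deduce Theorem~\ref{thm_BesFed} from the classical Besicovitch--Federer projection theorem in $\R^n$ by means of the same comparison diffeomorphism $\Phi$ that is used to prove Theorem~\ref{thm_Marstrand}. Working in the Poincaré ball model, I identify $\Hyp^n$ with the open unit ball $B \subset \R^n$ and $p$ with the origin, and take $\Phi \colon B \to B$ to be the self-map which by construction satisfies $\pi_V \circ \Phi = \Phi \circ P_V$ for every $V \in G(n,m)$, where $\pi_V$ denotes Euclidean orthogonal projection onto $V$.

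The next step is to reduce to the case where $A$ is contained in a fixed compact subset of $B$. The exhaustion $K_k = \{x \in B : |x| \leq 1 - 1/k\}$ partitions $A$ into countably many pieces $A_k = A \cap (K_k \setminus K_{k-1})$, and both conditions in the statement are stable under countable unions of subsets of $A$, so it suffices to prove the equivalence for each $A_k$ separately. On each compact $K_k$ two bi-Lipschitz comparisons are available: the hyperbolic and Euclidean metrics are bi-Lipschitz equivalent via the Poincaré conformal factor, and the smooth diffeomorphism $\Phi$ is bi-Lipschitz on $K_k$ in either metric. From these it follows that for subsets of $K_k$ the hyperbolic and Euclidean notions of $m$-rectifiability coincide, as do the hyperbolic and Euclidean $\Hhh^m$-null sets, and both notions are preserved under $\Phi$ and $\Phi^{-1}$. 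Combined with the conjugation identity $\pi_V(\Phi(A_k)) = \Phi(P_V A_k)$, this gives that the hyperbolic vanishing $\Hhh^m(P_V A_k) = 0$ is equivalent to the Euclidean vanishing $\Hhh^m(\pi_V(\Phi(A_k))) = 0$, and that pure $m$-unrectifiability of $A_k$ in $\Hyp^n$ is equivalent to pure $m$-unrectifiability of $\Phi(A_k)$ in $\R^n$. Applying the classical Besicovitch--Federer theorem to the Borel set $\Phi(A_k) \subset \R^n$, which has finite Euclidean $\Hhh^m$-measure by the above comparisons, yields the desired characterization for $A_k$, and taking countable unions concludes the argument.

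The main obstacle I expect is technical rather than conceptual: verifying uniform bi-Lipschitz bounds for $\Phi$ on each compact piece $K_k$ (which rests on properties of $\Phi$ already established in the proof of Theorem~\ref{thm_Marstrand}) and carefully transferring $m$-rectifiability through both $\Phi$ and the conformal change of metric. The localization to compact subsets is essential, because the hyperbolic and Euclidean metrics are highly inequivalent near the boundary sphere, where $\Phi$ likewise loses any bi-Lipschitz control.
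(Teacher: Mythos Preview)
Your approach is essentially the same as the paper's: both use the conjugation identity $P_V^\eucl\circ\Psi=\Psi\circ P_V^\poin$ (Lemma~\ref{lem_Psi_and_proj_1}) together with the fact that $\Psi$ is a diffeomorphism and $d_\poin$ is locally bi-Lipschitz equivalent to the Euclidean metric, reducing directly to the classical Besicovitch--Federer theorem in $\R^n$. The paper's argument is terser and does not spell out the compact exhaustion or the transfer of rectifiability, but these are exactly the details you make explicit, so your proposal fills in precisely what the paper leaves implicit.
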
 
The Euclidean version of this result is sometimes also referred to as the Besicovitch-Federer projection theorem; see Theorem 18.1 in~\cite{Mattila1995}.

\section{Proofs of Theorem \ref{thm_Marstrand} and Theorem \ref{thm_BesFed}}\label{sec_proofs}

First, we recall some preliminaries on hyperbolic geometry and fix the notation used in the sequel. For a more detailed account on hyperbolic geometry as it is used here, we recommend the textbooks \cite{BH1999} and \cite{BenedettiPetronio}.\smallskip

Consider the Poincar\'e model of hyperbolic $n$-space $\Hyp^n$, that is, the metric space $(D^n, d_\poin)$ where 
$D^n:=\{x\in \R^n:|x|<1\}$ is the open unit ball in $\R^n$ and
and the Poincar\'e metric $d_\poin$ is given by
\[d_\poin(x,y)=2\, \atanh\left(\frac{|x-y|}{(1-2\langle x,y\rangle+|x|^2+|y|^2)^\frac{1}{2}}\right).\]
for all $x,y\in D^n$.\smallskip

Let $\Gamma$ be a circle in $\R^n$ that intersect $\partial D^n$ orthogonally. Then $\Gamma\cap D^n$ is a hyperbolic geodesic in the Poincar\'e model $(D^n,d_\poin)$. The same holds for $L\cap D^n$ for $L\in G(n,1)$. Conversely, every geodesic of hyperbolic space displayed in the Poincar\'e model is distance minimizing with respect to $d_\poin$ and is either of the type $\Gamma\cap D^n$ or $L\cap D^n$. Moreover, the Poincar\'e model is known to be a conformal model of hyperbolic space, i.e., the angle in which two curves in hyperbolic $n$-space intersect equals the Euclidean angle in which their representatives in $(D^n,d_\poin)$ intersect. This makes the 
Poincar\'e model a natural choice for studying orthogonal projections of hyperbolic $n$-space.
\begin{figure}[h] 
\begin{center}
\def\svgwidth{200pt}
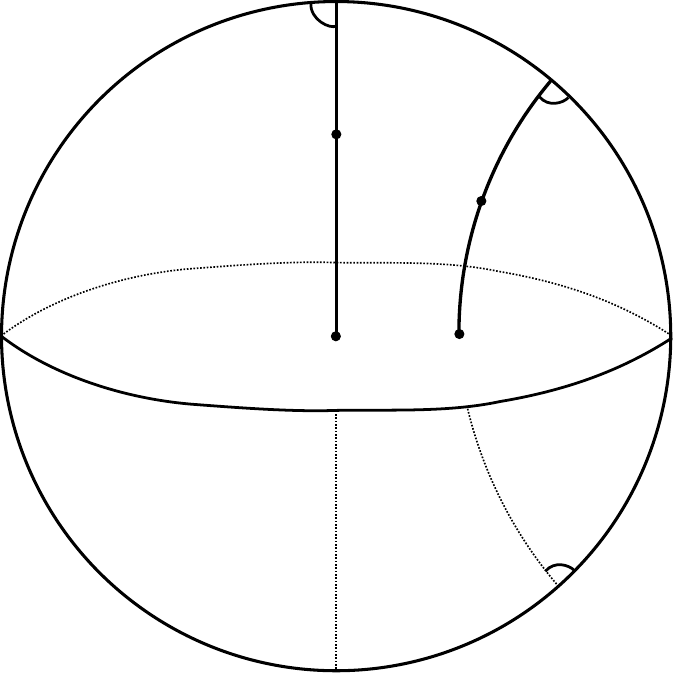
\end{center}
\caption{The projection $P_V^{\protect\poin} :D^3\to D^3\cap V$.} \label{fig_proj_hypn}
\end{figure}    

Choose $0$ to be the representative of the base point $p\in \Hyp^n$ in the model $(D^n, d_\poin)$. This choice is made without loss of generality since hyperbolic space is homogeneous with respect to its group of isometries. Then, for all $V\in G(n,m)$, the hyperbolic $m$-plane $\exp_p(V)$ corresponds to the $m$-dimensional disc $V\cap D^n$ in the model $(D^n,d_\poin)$. 
For each $V\in G(n,m)$, define \[P^\poin_V:D^n\to V\cap D^n\] to be the closest-point projection onto $V\cap D^n$ with respect to the metric $d_p$; see Figure~\ref{fig_proj_hypn}. 
Then, the family of hyperbolic orthogonal projections $P_V:\Hyp^n\to \exp_p(V)$, $V\in G(n,m)$, can be viewed as $P^\poin_V:D^n\to V\cap D^n$, $V\in G(n,m)$. Moreover, by conformality of the Poincar\'e model $(D^n,d_\poin)$, the family the projections $P^\poin_V:D^n\to V\cap D^n$ are orthogonal projections along geodesics in $(D^n,d_\poin)$.\smallskip

Now, consider the mapping $\Psi:D^n\to D^n$, defined by 
\[\Psi(x):=\frac{\tanh(\frac{1}{2}\atanh |x|)}{|x|}x,\]
for $x\in D^n\woz$, and $\Psi(0)=0$.
Notice that $\Psi$ is a bijection with inverse defined by
\[\Psi^{-1}(y)= \frac{\tanh(2\atanh |y|)}{|y|}y\] for $x\in D^n\woz$, and $\Psi^{-1}(0)=0$.
One can check that $\Psi$ maps every geodesic $\Gamma\cap D^n$ (where either $\Gamma\in G(n,1)$ or $\Gamma$ is a circle that intersects $\partial D^n$ orthogonally) to the Euclidean line segment that connects the endpoints $p_1,p_2$ of $\Gamma\cap D^n $; see Figure~\ref{fig_Psi_hyp}. 
 
\begin{figure}[h] 
\begin{center}
\def\svgwidth{200pt}
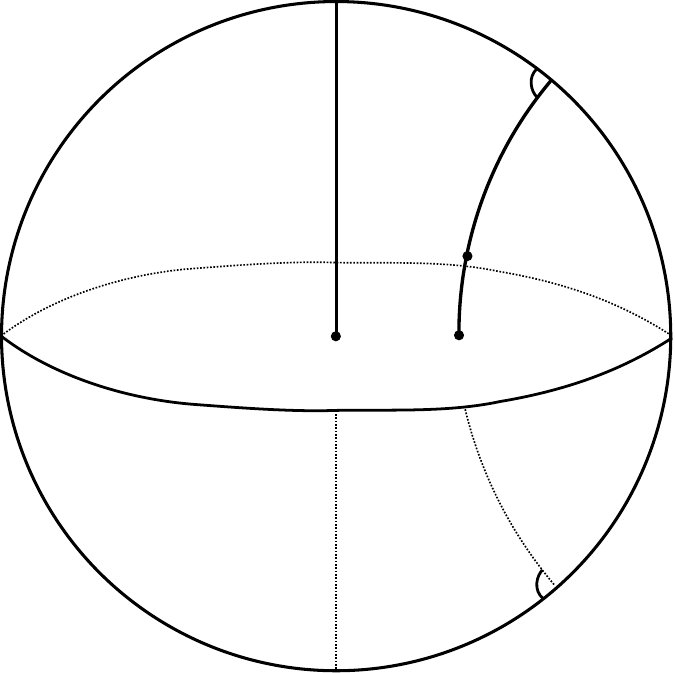
\end{center}
\caption{The mapping $\Psi: D^3\to D^3$ where $\Gamma$ is a geodesic in $(D^3,d_{\protect\poin})$ } \label{fig_Psi_hyp}
\end{figure}   

The metric space $(D^n,d_\klein)$ where $d_\klein(x,y):=d_\poin(\Psi^{-1}(x),\Psi^{-1}(y))$, for all {$x,y\in D^n$}, is often called the Klein model or the projective model of hyperbolic space; see~\cite{BenedettiPetronio} for details. Note that the Klein model is not a conformal model of hyperbolic space. However, if $\Gamma_1$ and $\Gamma_2$ are representatives of hyperbolic geodesics in $(D^n,d_\klein)$ and if $0\in \Gamma_1$, then the respective geodesics in hyperbolic space intersect orthogonally if and only if $\Gamma_1$ and $\Gamma_2$ intersect orthogonally in the Euclidean sense in $D^n$. \smallskip

The symmetry of $\Psi$ yields the following relation between the orthogonal projections in the Poincar\'e model and Euclidean orthogonal projections.

\begin{lemma}\label{lem_Psi_and_proj_1} For all $V\in G(n,m)$, the following holds: $P_V^\poin=\Psi^{-1}\circ P_V^\eucl\circ \Psi$.
\end{lemma}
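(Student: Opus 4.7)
My plan is to reduce the identity $P_V^\poin = \Psi^{-1}\circ P_V^\eucl\circ \Psi$ to a $2$-dimensional statement via a symmetry argument and then settle the planar case by elementary Euclidean geometry of circles orthogonal to $\partial D^2$.

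For the reduction, fix $x \in D^n$ and put $q := P_V^\poin(x)$ and $v_0 := P_V^\eucl(x) \in V$. The degenerate cases $x \in V \cup V^\perp$ are handled at once using that $\Psi$ is radial. Otherwise, let $W := \mathrm{span}(v_0, x - v_0)$, a $2$-plane through $0$ containing both $x$ and $v_0$ and satisfying $V \cap W = \mathrm{span}(v_0)$ and $V^\perp \cap W = \mathrm{span}(x - v_0)$. A dimension count yields the orthogonal decomposition $V = (V \cap W) \oplus (V \cap W^\perp)$, so the Euclidean reflection $\rho_W$ of $\R^n$ across $W$ preserves $V$ (it fixes the first summand and negates the second). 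Being orthogonal and fixing $0$, $\rho_W$ is also an isometry of $(D^n, d_\poin)$ preserving $V \cap D^n$, and since $\rho_W(x) = x$ the uniqueness of the Poincar\'e closest point forces $\rho_W(q) = q$; hence $q \in W$. Because $\Psi$ is radial, $\Psi(x)$ and $\Psi(q)$ also lie in $W$, and on vectors of $W$ the projection $P_V^\eucl$ coincides with projection onto $V \cap W$. The identity thus reduces to the case $n = 2$, $m = 1$.

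In the planar case, after a rotation identify $V \cap W$ with $\R \times \{0\}$ and write $q = (a, 0)$. The Poincar\'e geodesic $\Gamma$ from $x$ to $q$ meets $V \cap W$ orthogonally at $q$; by conformality of the Poincar\'e model this is also Euclidean orthogonality. If $a = 0$, then $\Gamma$ is the vertical diameter and the identity is trivial. If $a \neq 0$, then $\Gamma$ is an arc of a Euclidean circle whose center lies on the horizontal axis at some $(c, 0)$ with radius $r$; orthogonality $\Gamma \perp \partial D^2$ gives $c^2 = r^2 + 1$, and substituting $x_1^2 + x_2^2 = 1$ into $(x_1 - c)^2 + x_2^2 = r^2$ shows that both points of $\Gamma \cap \partial D^2$ have first coordinate $1/c$. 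Hence, by the geometric description of $\Psi$, the image $\Psi(\Gamma \cap D^2)$ is the vertical Euclidean chord through $(1/c, 0)$. Since $\Psi$ is radial and $q$ lies on the horizontal axis, $\Psi(q)$ is the unique point of this chord on the horizontal axis, namely $(1/c, 0)$; and because $\Psi(x)$ lies on the same vertical chord, its Euclidean projection onto $V \cap W$ is also $(1/c, 0)$. Thus $\Psi(q) = P_V^\eucl(\Psi(x))$, and applying $\Psi^{-1}$ yields the lemma. The main obstacle is the reduction step, where one must verify that $\rho_W$ preserves $V$ via the decomposition $V = (V \cap W) \oplus (V \cap W^\perp)$; this is where it crucially matters that $v_0$ is the \emph{Euclidean} foot of $x$ on $V$, so that $x - v_0 \in V^\perp$. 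The planar calculation that follows is then routine.
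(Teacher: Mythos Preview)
Your proof is correct. Both your argument and the paper's rest on the same geometric fact: the Poincar\'e geodesic $\Gamma$ through $x$ orthogonal to $V$ is sent by $\Psi$ to a Euclidean chord that is again orthogonal to $V$, so that $\Psi(P_V^\poin(x))$ and $P_V^\eucl(\Psi(x))$ are both the unique point of $\Psi(\Gamma)\cap V$. The difference lies in how this orthogonality of $\Psi(\Gamma)$ to $V$ is obtained. The paper argues directly in $n$ dimensions: since $\Gamma$ meets $V$ orthogonally, the Euclidean reflection through $V$ preserves $\Gamma$, hence swaps its two boundary points $p_1,p_2\in\partial D^n$; the chord $[p_1,p_2]=\Psi(\Gamma)$ is then automatically perpendicular to $V$. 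You instead first reduce to a $2$-plane $W$ via the reflection $\rho_W$ (using the decomposition $V=(V\cap W)\oplus(V\cap W^\perp)$, which indeed holds precisely because $x-v_0\in V^\perp$), and then verify the orthogonality of the chord by the explicit computation $x_1=1/c$ for the endpoints. Your route is a little longer but entirely self-contained at the coordinate level; the paper's route is shorter and works uniformly in all dimensions without passing to a $2$-dimensional slice. Either way, the final identification $\Psi(q)=P_V^\eucl(\Psi(x))$ is the same one-line conclusion.
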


\begin{proof}
Let $x\in D^n$ and $V\in G(n,m)$. By $\Gamma$ denote the circular arc in $D^n$ that is perpendicular to $V$ and $\partial D^n$ and contains $x$. Then, by definition, 
$P_V^\poin(x)$ is the unique intersection point of $V$ and $\Gamma$. Since $\Gamma$ intersects~$V$ orthogonally, the set $\Gamma\cap \partial D^n=\{p_1,p_2\}$ is symmetric under the reflection through~$V$.  Thus, the line segment $\Psi(\Gamma)$ connecting $p_1$ and $p_2$ intersects~$V$ orthogonally; see Figure~\ref{fig_Psi_hyp}.
By definition, $\Psi(x)$ is the unique intersection point of~$\Gamma$ with the ray that emerges from the origin and goes through $x$ within~$D^n$. Then, since $\Psi(x)\in \Psi(\Gamma)$, and $\Psi(\Gamma)$ intersects $V$ orthogonally, $P_V^\eucl(\Psi(x))$ is the point where $\Psi(\Gamma)$ intersects $V\cap D^n$. On the other hand, $\Psi(P^\poin_V(x))$ is the intersection point of~$\Psi(\Gamma)$ and the ray that emerges from the origin and passes through $P^\poin_V(x)$. However, this intersection point is exactly $P_V^\eucl(\Psi(x))$; see Figure~\ref{fig_Psi_hyp}.\end{proof}

\begin{proof}[Proof of Theorem~\ref{thm_Marstrand} and Theorem~\ref{thm_BesFed}]
Note that the restriction of the mapping $\Psi:D^n\to D^n$ to $D^n\woz$ is a $C^\infty$-diffeomorphism . Moreover, the metric $d_\poin$ is locally bi-Lipschitz to the Euclidean metric on~$D^n$. Hence, for every set $A$, every $m$-plane $V\in G(n,m)$ and every $s>0$, 
$P^\eucl_V(A)$ is an $\Hhh^s$-zero set if and only if $P^\poin_V (A)$ is an $\Hhh^s$- zero set. In particular, it follows that $\dim P^\eucl(A)=\dim P^\poin_V (A)$. Moreover, $P^\eucl_V(A)$ has non-empty interior in $V$ if and only if $P^\poin_V (A)$ has non-empty interior in $V$. Hence, Theorem~\ref{thm_Marstrand} and Theorem~\ref{thm_BesFed} follow from their well-known analogs for orthogonal projection onto $m$-planes in $\R^n$.
\end{proof}

\vspace{0.5cm}

\section{Remark on transversality and projection theorems}\label{sec_remarks}

In \cite{PS2000} Peres and Schlag establish a very general projections theorem for families of (abstract) projections from compact metric spaces to Euclidean space. Namely, their result states that if a sufficiently regular family of projections satisfies a certain transversality condition then this yields bounds for the Sobolev dimension of the push-forward (by the projections) of certain measures. While Peres and Schlag's main applications of this result concern Bernoulli convolutions, all the classical Marstrand-type projection theorems for Euclidean spaces $\R^n$ can be deduced as corollaries from their result; see Section~6 in~\cite{PS2000} and Section~18.3 in~\cite{Mattila2015}.  Moroever, Hovila et.\ al.~\cite{HJJL2012} has proven that if a family of abstract projections satisfies transversality with sufficiently good transversality constants  then a Besicovitch-Federer type characterization of purely unrectifiable sets in terms of this family of projections follows. Therefore, transversality has proven to be a powerful method in establishing Marstrand-type as well as Besicovitch-Federer type projection theorems in various settings. In particular, the works \cite{Hovila2014} (Heisenberg groups) and \cite{BaloghIseli2016} (Riemannian surfaces of constant curvature) are based on Peres and Schlag's notion of transversality.\smallskip

In fact, it is possible to establish transversality for the family of orthogonal projections in the Poincar\'e model, $P^\poin_V:D^n\to \exp_p(V)\cap D^n$, $V\in G(n,m)$. This is worked out in detailed in the second author's PhD thesis~\cite{AnninaPhD}, Section 6.2. The transversality constants obtained (namely $L=2$ and $\delta=0$ in the notation of \cite{AnninaPhD}) are sufficient to imply both Marstrand-type as well as Besicovitch-Federer-type projection theorems. In particular, Theorem~\ref{thm_BesFed} can be deduced as a corollary from this result.  However, the upper bounds for the dimension of the exceptional set of planes for Marstrand-type projection theorems
in general depend on the transversality constants; see Theorem 7.3 in \cite{PS2000}. In particular, in the cases where $\dim A>m$, the bounds obtained by establishing transversality are worse that the bounds in Conclusions (2.b) and (3.b) of Theorem \ref{thm_Marstrand}. The transversality constants $L=2$ and $\delta=0$ obtained in \cite{AnninaPhD} could still be improved. In particular, a lengthly but straight-forward calculation shows that $L$ can be improved to $3$. However, in order to obtain Theorem \ref{thm_Marstrand} as a consequence of transversality, one would need $L=\infty$. However, this is not possible due to insufficient regularity of the mapping $\Psi:D^n\to D^n$ in the origin.
\newpage

\bibliographystyle{abbrv}
\bibliography{literature_projections}

\end{document}